\newtheorem{theorem}{Theorem}[section]
\newtheorem{definition}{Definition}[section]
\newtheorem{remark}{Remark}[section]
\numberwithin{equation}{section}
\begin{document}

\title[GENERALIZED NEWTON-BUSEMANN LAW]{GENERALIZED NEWTON-BUSEMANN LAW FOR TWO-DIMENSIONAL STEADY HYPERSONIC-LIMIT EULER FLOWS PASSING RAMPS WITH SKIN-FRICTIONS}

\author{Aifang Qu}
\author{Xueying Su}
\author{Hairong Yuan}

\address[A. Qu]{Department of Mathematics, Shanghai Normal University,
Shanghai,  200234,  China}\email{\tt afqu@shnu.edu.cn}

\address[X. Su]{Center for Partial Differential Equations, School of Mathematical Sciences,
East China Normal University, Shanghai 200241, China}\email{\tt suxueying789@163.com}

\address[H. Yuan]{School of Mathematical Sciences,  Key Laboratory of Mathematics and Engineering Applications (Ministry of Education) \& Shanghai Key Laboratory of PMMP,  East China Normal University, Shanghai 200241, China}\email{\tt hryuan@math.ecnu.edu.cn}

\keywords{Compressible Euler equations; Hypersonic-limit flow; Newton-Busemann law; Radon measure solution;  Dirac measure; Skin friction.}

%msc2020
\subjclass[2020]{35L50, 35L65, 35Q31, 35R06, 76K05}

\date{\today}

\maketitle
\allowbreak
\allowdisplaybreaks
\tableofcontents %disable for short paper
\begin{abstract}
    By considering Radon measure solutions for boundary value problems of stationary non-isentropic compressible  Euler equations on hypersonic-limit flows passing ramps with frictions on their boundaries, we construct  solutions with density containing Dirac measures supported on the boundaries of the ramps, which represent the infinite-thin shock layers under different assumptions on the skin-frictions. We thus derive corresponding generalizations of the celebrated Newton-Busemann law in hypersonic aerodynamics for distributions  of drags/lifts on ramps.
\end{abstract}

\section{Introduction}\label{sec1}

The problem of supersonic flow passing bodies is of fundamental importance in aeronautical engineering, as the distribution of aerodynamical drags/lifts provided by analysis of such problems provides a start for designation of supersonic vehicles. It is well-known that shock waves emerge (cf. Courant and Friedrichs \cite[Section 117]{12}) and the flow field around the body is generally described by discontinuous functions, and one needs the concept of integrable weak solutions, which are Lebesgue measurable functions, to study such problems. Moreover, it is observed that as Mach number of the upcoming supersonic flow goes to infinity, shock layer, i.e., the region bounded by the upwind boundary surface of the body and the shock front lying ahead of it, will be extremely thin and the density of mass will tend to infinity. In other words, mass will concentrate on the upwind boundary of the body. For more details on the physical aspects of inviscid hypersonic flows, we refer to Anderson \cite[Chapter 1]{new}, as well as the monograph \cite{HP} of Hayes and Probstein. Thus, for compressible Euler equations of inviscid flows, classical integrable weak solutions cannot depict the hypersonic-limit flow field correctly and one needs to introduce the notion of some kind of measure-valued solutions (Radon measure solutions) to characterize the infinite-thin shock layers.

 %obtain the stability of multidimensional delta-shock solution, which is actually a Radon measure, to the pressureless Riemann problem by the method of vanishing viscosity.  demonstrates the uniqueness of the weak solutions to the Cauchy initial problem of pressureless Euler system when the initial data is a Radon measure, rather than a measurable function. Pressureless Euler system are also used to model sticky particles and attain Radon measure solutions, see Grenier and Brenier \cite{GB}.

However, to our knowledge, for compressible Euler equations on physical problems with solid boundaries, no rigorous definitions of Radon measure solutions were raised until the works \cite{GQY,JQY,QY,QY2,QYZ2,QYZ}. Qu, Yuan, and Zhao \cite{QYZ} studied supersonic flow past a two-dimensional straight ramp and its hypersonic limit. They proposed a definition of Radon measure solution to the problem, and proved that the sequence of weak solutions containing  shock waves converges vaguely (as suitable measures when the Mach number of upcoming supersonic flow goes to infinity) to a singular measure solution of the hypersonic-limit flow, while the singular measure solution was calculated explicitly from the definition. The authors also derived the Newton's sine-squared law from this definition. Jin, Qu, and Yuan \cite{JQY} continued to investigate hypersonic-limit flow passing a curved ramp. They similarly introduced a definition of Radon measure solution and calculated the corresponding solution with density containing a Dirac measure, and corroborated the classical Newton-Busemann pressure law. For other applications of Radon measure solutions in solving high-Mach-number-limit of piston problems for polytropic gases and Chaplygin gas, as well as the problem of conical flows, see \cite{GQY,qsy,QY,QY2,QYZ2}. These results illustrated the mathematical rationality of the concept of Radon measure solutions. With help of such mathematical concepts, the complicated derivations of the Newton-Busemann law (cf. \cite[pp.67-71]{new}) were reduced to straightforward calculations, and one might obtain more general form of this law,  for example, for cones with attack angles \cite{QY, qsy}, which was not reported in the literature before.
%Lukas, Michael, Sahoo, and Abhrojyoti \cite{2} construct a measure-valued solution to an initial-boundary value problem on one-dimensional pressureless gas dynamics by using the method of generalized potentials and characteristic triangles.

All these works, including the classical Newton-Busemann law, assumed that the boundary surface of the obstacle is smooth, namely there is no any skin-friction. % Based on physical facts, there often exists damping resistance on the obstacles.
In this work, we  consider the problem of hypersonic-limit flow passing a curved ramp with skin-frictions, by proposing three different types of hypothetical conditions on such surface frictions.
For the first, we consider the case when the friction is determined by the velocity and the line density of gas in the infinite-thin shock layer. For the second, we suppose the skin friction is proportional to the pressure on the ramp. For the third, we assume the gas particles are all stuck to the ramp once they approach it and they no longer move, i.e., the infinite-thin shock layer is `frozen'. For each type of frictions, we solve the associated Radon measure solutions and derive the corresponding Newton-Busemann laws on distributions of lifts/drags. It turns out that once we have a proper definition, these achievements depend on finding solutions to some ordinary differential equations.  We also remark that one usually predicts the surface skin-friction by manipulating the viscous boundary layers under certain assumptions, see, for example, \cite[Section 6.9]{new}. We expect the ideas and results presented in this work provide new insights for such problems.

%Finding Radon measure solutions to these three problems turns out to be finding solutions to some ordinary differential equations. We also generalize the classical Newton-Busemann law to these three cases.

It should be noted that the hypersonic-limit flow is the pressureless Euler flow \cite{QYZ}. Studies on measure solutions of Cauchy problems for the pressureless Euler system  are enlightening and abundant. We just review a few works. E, Rykov, and Sinai \cite{ERS} presented an explicit construction of weak solutions, which are  Radon measures on the real line,  by means of a generalized variational principle. Bouchut \cite{BF} constructed  measure solutions which contains a delta shock to Riemann problems. For the uniqueness of measure solutions to Cauchy problems, see Li, Yang \cite{LY} and Huang, Wang \cite{HW}. See also \cite{GB, CL,SZ,YZ} for more results on delta-shock solutions of pressureless Euler systems.

For mathematical results on integrable weak solutions to supersonic flow passing bodies, we refer to Chen \cite{chen2022} for a review, and Kuang, Xiang, Zhang \cite{kxz} for verification of hypersonic similarity law. No concentration, thus no Newton-Busemann law,  was considered in these works.

In the following Section \ref{sec2}, we formulate, in an informal way, the three boundary value problems on hypersonic limit flow passing ramps with three types of skin-frictions. In Section \ref{sec3}, we  propose definitions of Radon measure solutions for these problems, and solve them explicitly. The main results are summarized as in Theorems \ref{thm1}, \ref{thm2},  \ref{thm3}, and Remark \ref{rem36}.

\section{Formulation of problems and skin-frictions}\label{sec2}

We now formulate the boundary value problems on hypersonic-limit Euler flows passing ramps.  Suppose a curved ramp lying in the right-half $xOy$-plane is given by
\begin{equation}
    \text{Ramp}~\dot{=}~\{(x,~y)~|~x\geq0,~0\le y\leq \Tilde{b}(x)\},~
\end{equation}
in which $\Tilde{b}(x)$ is a smooth function with $\Tilde{b}(0)=0$ and $\Tilde{b}'(x)\geq0$,  see Figure \ref{fig1}. The surface of the ramp is denoted as
\begin{equation}\label{eq22}
    R~\dot{=}~\{(x,~y)~|~x\geq0,~y= \Tilde{b}(x)\}.
\end{equation}
Uniform supersonic gas from the left-upper half-space flows towards the ramp with velocity parallel to the $x$-axis, and the space above the ramp which is filled with gas is denoted by
\begin{equation}
    \Omega~\dot{=}~\{(x,~y)~|~x\geq0,~y\geq \Tilde{b}(x)\}.
\end{equation}
The supersonic flow is assumed to be governed by the steady non-isentropic compressible Euler equations
%-----------------------------------fig1-----------
\begin{figure}[htb]
\centering
\includegraphics[scale=0.39]{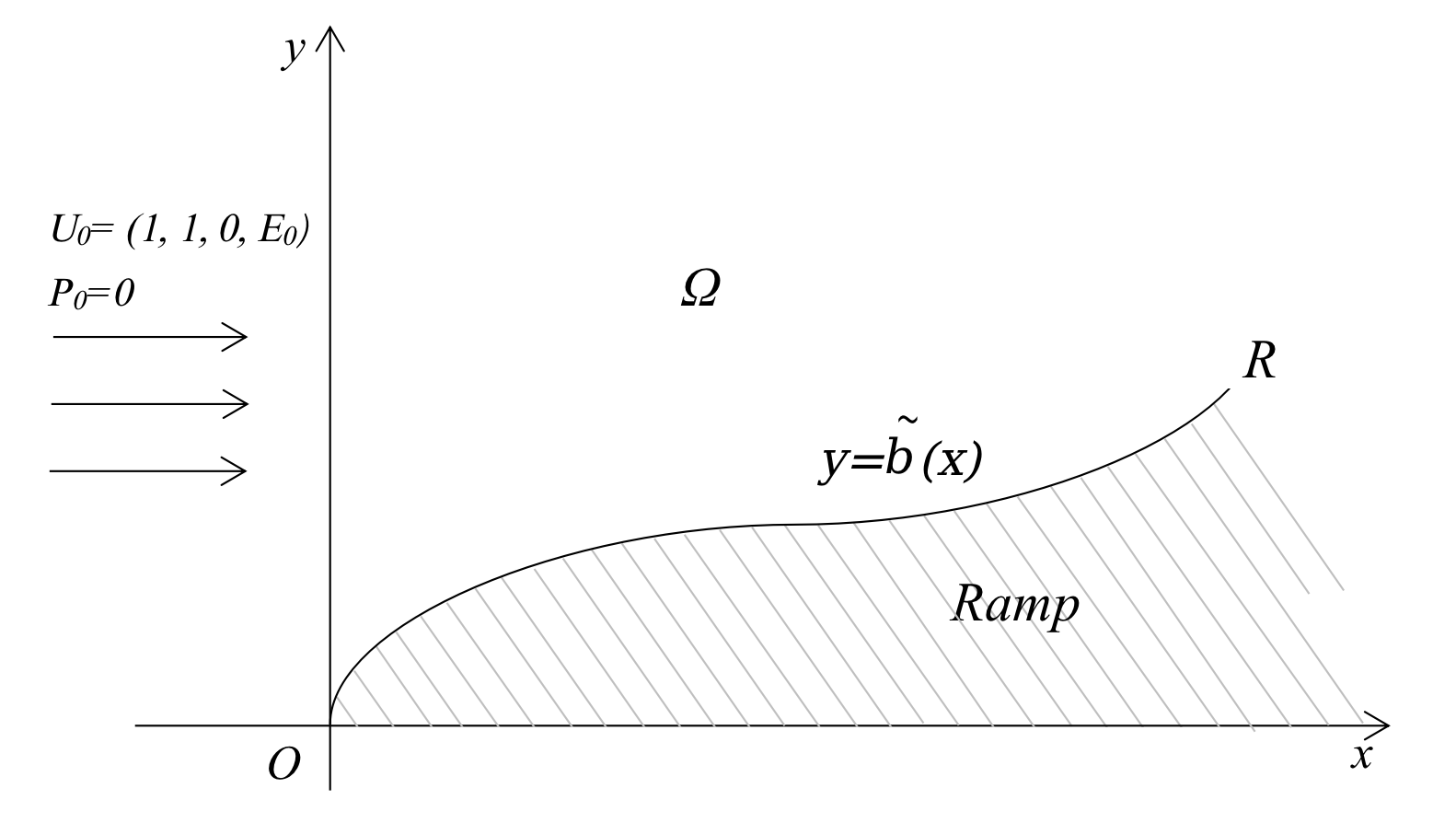}
\caption{Uniform hypersonic-limit flow passing a ramp.}\label{fig1}
\end{figure}
\begin{align}
    & (\rho u)_x+(\rho v)_y=0, \label{1.4} \\
    & (\rho u^2+p)_x+(\rho uv)_y=0,\label{1.5} \\
    & (\rho uv)_x+(\rho v^2+p)_y=0,\label{1.6} \\
    & (\rho u E)_x+(\rho vE)_y=0,\label{1.7}
\end{align}
which represent the conservation of mass, momentum and energy, respectively. In the system, $\rho$ stands for the mass density of gas, $u, v$  the argument of the velocity $V$ along $x$ and $y$-axis respectively, and $E$ the total enthalpy per unit mass of gas. The pressure of the gas is given by
\begin{equation}
    p=\frac{\epsilon}{\epsilon+1}\rho\cdot(E-\frac{1}{2}(u^2+v^2)). \label{1.8}
\end{equation}
Here, $\gamma\doteq\epsilon+1$ (with $\epsilon>0$) is the adiabatic exponent of polytropic gas. As in \cite{JQY}, by suitable scaling, we may assume that the upstream flow is
\begin{equation}
    U_0\doteq(\rho_0,~ V_0=(u_0,~ v_0),~E_0)=(1,~1,~0,~E_0),\quad E_0>\frac{1}{2}, \label{1.9}
\end{equation}
hence
\begin{equation}
    p_0=\frac{\epsilon}{\epsilon+1}(E_0-\frac{1}{2}). \label{1.10}
\end{equation}
On $R$,~i.e., the surface of the ramp, the slip condition
\begin{equation}
    V\cdot\mathbf{n}=0 \label{1.11}
\end{equation}
is subjected, where $\mathbf{n}$ denotes the unit outward normal vector of $\Omega$ on $R$. The equations \eqref{1.4}-\eqref{1.11} consist the classical formulation to the problem of supersonic flow passing a ramp.

As for limiting hypersonic flows, i.e., the Mach number of the upcoming flow, which we denote as $M_0$, tends to infinity, combining \eqref{1.9} and the fact that $c\doteq\sqrt{(\epsilon+1)p/\rho}$, where $c$ represents the local sound speed, one has
\begin{equation}
    M_0^2=\frac{V_0^2}{c_0^2}=\frac{1}{\epsilon (E_0-\frac{1}{2})}.
\end{equation}
It shows that $M_0$ increasing to $\infty$ equals to $\epsilon$ decreasing to $0$, and therefore, by \eqref{1.10}, the pressure of the upcoming gas is $p_0=0$. Hence, hypersonic-limit flow may be considered as pressureless flow if no physical boundary occurs, cf. Remark \ref{rm32}.

%~The above boundary value problem with $\epsilon=0$ models the hypersonic-limit flow passing a ramp without damping.~In this work, we further study three problems (Problem 1, Problem 2 and Problem 3) below if the ramp has damping force to the gas on its boundary, and then generalize the classical Newton-Busemann law.
We now specify the skin-frictions. %Apart from the slip boundary condition, damping boundary conditions will also be proposed.
In the sequel, we use the arc-length parameter $s$ of the graph of $y=\Tilde{b}(x)$, with $\mathrm{d}s=\sqrt{1+\Tilde{b}'(x)^2}\,\mathrm{d}x,$ which means $$s(x)=\int_0^x\sqrt{1+\Tilde{b}'(t)^2}\,\mathrm{d}t.$$
By the inverse function theorem,  there exists a (at least) $C^2$ function $\psi$, such that $x=\psi(s)$. We write $b(s)\doteq\Tilde{b}(\psi(s))$, and the coordinates on $R$ becomes $(\psi(s),b(s))$. We set \begin{align}
w(s)\doteq\sqrt{u(\psi(s),b(s))^2+v(\psi(s),b(s))^2}\label{eq213new}
\end{align}
to be the (scalar) speed of gas particles along the ramp. We use $\dot{(\cdot)}$ to denote the derivative of functions with respect to $s.$~The unit outward normal vector of $\Omega$ on $R$ is $\mathbf{n}(s)=(\dot{b}(s),-\dot{\psi}(s)) $ and the unit tangent vector along $R$ is $\mathbf{t}(s)=(\dot{\psi}(s),\dot{b}(s))$. Furthermore, if we denote $f_1(s),~f_2(s)$ as the force acting on the gas  by the ramp along $x$ and $y$-axis, then the tangential skin friction $f$ and normal pressure $N$ received by the ramp are
\begin{align}
    &f(\psi(s),b(s))\doteq-(f_1,f_2)\cdot\mathbf{t}=-f_1\dot{\psi}(s)-f_2\dot{b}(s), \label{e10}\\
    &N(\psi(s),b(s))\doteq-(f_1,f_2)\cdot \mathbf{n}=-f_1\dot{b}(s)+f_2\dot{\psi}(s).\label{e1}
\end{align}
We assume that $N\ge0$ in this work, namely particles always  impinge on the ramp, cf. Remark \ref{rm34}. Notice that $f$ should also be nonnegative, because the particles should move along the direction of $\mathbf{t}$.
%Using these notions, we formulate our problems with additional boundary conditions on dampings.

For the first case, we assume that the skin-friction acting on the particles by the ramp's boundary is determined by $$f(\psi(s),b(s))=kw_\rho(s) (w(s))^\alpha, \quad \alpha\geq1.$$ Here $k>0$ is a given constant and $w_\rho(s)$ is the line density of gas mass on $R$ if there is concentration of mass, as to be clarified in \eqref{2.16}. This is motivated by the standard resistance formula for $\alpha=2$, and linear damping for $\alpha=1$. In this situation, by \eqref{e10}, there holds
\begin{equation}
f_1\dot{\psi}(s)+f_2\dot{b}(s)=-kw_\rho(s) (w(s))^\alpha. \label{1.12}
\end{equation}
%Notice that the minus sign means the force is pointing to the direction that $s$ decreasing.
Then informally the first problem we consider is the following:

\textsc{Problem 1:} Find a solution to \eqref{1.4}-\eqref{1.7}, \eqref{1.9}, \eqref{1.10}($\epsilon=0$), \eqref{1.11} and \eqref{1.12}.

\medskip

For the second case, we consider the usual assumption that the friction is proportional to the stress, namely given by $f(\psi(s),b(s))=\eta N(\psi(s),b(s))$, where $\eta>0$ is the dynamic friction coefficient between the ramp and the flow in the infinite-thin shock layer. Again, by \eqref{e10} and \eqref{e1}, we derive the boundary condition
\begin{equation}
f_1\dot{\psi}(s)+f_2\dot{b}(s)=\eta\cdot(f_1\dot{b}(s)-f_2\dot{\psi}(s)).\label{1.13}
\end{equation}
We thus have informally the second problem:

\textsc{Problem 2:} Find a solution to \eqref{1.4}-\eqref{1.7}, \eqref{1.9}, \eqref{1.10}($\epsilon=0$), \eqref{1.11} and \eqref{1.13}.

\medskip

We are also interested in the situation that particles of the upcoming flow immediately stick to the ramp once they reach it and do not move any more. To solve this, we apply an approximation as follows. For $k=0$ in Problem 1 (or $\eta=0$ in Problem 2), which represents the case
of null skin-friction, one calculates the corresponding velocity along the ramp, denoted as $(u_1(\psi(s),b(s)), v_1(\psi(s),b(s)))$. Based on this, we introduce a coefficient $\mu,~0<\mu<1,$
and require that
\begin{equation}
    u(\psi(s),b(s))=\mu u_1,\quad v(\psi(s),b(s))=\mu v_1. \label{1.14}
\end{equation}
Letting $\mu\to0$, we will get a formal answer. We thus consider the following:

\textsc{Problem 3:} Find a solution to \eqref{1.4}-\eqref{1.7}, \eqref{1.9}, \eqref{1.10}($\epsilon=0$), \eqref{1.11} and \eqref{1.14}.

%We will give a specific definition of Problem 3 hereinafter (see Problem $3'$).

\section{Measure solutions and main results}\label{sec3}

To carry out mathematical analysis of the aforementioned three problems, we need their rigorous formulations, namely a concept of measure solutions, which should include the infinite-thin shock layers observed by physicists as indicated in Section \ref{sec1}. It turns out that the theory of Radon measures provides a suitable framework, since these measures could be considered as linear continuous functionals on the space of compactly supported continuous functions, thus easy to take them as solutions to the compressible Euler equations by extending the well-accepted definitions of integrable weak solutions. Since the infinite-thin shock layer attached to a ramp is the curve $R$ (see \eqref{eq22}), we need Dirac measures supported on the curve to represent it. The unknowns are reduced to the weights of the Dirac measures, which are functions on $R$. Mathematically, by this reduction of order, we reduce a problem on partial differential equations to problems of ordinary differential equations, which could be solved explicitly for many cases, thus demonstrate efficiency of this approach.

\subsection{Radon measure solutions}\label{sec31}

Let $\mathcal{B}$ be the Borel $\sigma$-algebra on $\mathbb{R}^2$ and $m$ a Radon measure on $(\mathbb{R}^2,\mathcal{B})$. The pairing between $m$ and a test function $\Tilde{\phi}(x,y)\in C_0(\mathbb{R}^2)$, where $C_0(\mathbb{R}^2)$ is the set of continuous functions on $\mathbb{R}^2$ with compact supports, is given by
$$\langle m,~\Tilde{\phi}\rangle=\int_{\mathbb{R}^2}\Tilde{\phi}(x,y)\,\mathrm{d}m.$$
For example, $W(s)\delta_R$, the Dirac measure supported on $R$ with weight $W(s)$, is defined by
$$\langle W(s)\delta_R,~\Tilde{\phi}\rangle=\int_{R}W(s)~\Tilde{\phi}(\psi(s),b(s))\,\mathrm{d}s.$$
We also denote $\mathcal{L}^2$ as the Lebesgue measure on $\mathbb{R}^2$, and write $\mu\ll\varrho$ if the measure $\mu$ is absolutely continuous with respect to a nonnegative measure $\varrho$, with Radon-Nikodym derivative $\mathrm{d}\mu/\mathrm{d}\varrho$.  We now raise a definition of Radon measure solutions to the Problems 1, 2 and 3. The motivation of such a definition could also be found in \cite{JQY,QY,QYZ2,QYZ}.

\begin{definition}
    For given $\epsilon\geq0$,~let $m^0,~ n^0 ,~m^1,~ n^1,~ m^2,~ n^2,~m^3,~n^3,~\varrho,~\wp$ be Radon measures on $\overline{\Omega}$, and $w_{f_1}(s),~w_{f_2}(s)$ be locally integrable functions on $[0,\infty)$. Suppose that

(\romannumeral1) for any  $ \Tilde{\phi}\in C_0^1(\mathbb{R}^2)$(continuously differentiable functions with compact supports), there hold
\begin{align}
    & \langle m^0,~\Tilde{\phi}_x\rangle+\langle n^0,~\Tilde{\phi}_y\rangle+\int_0^\infty\rho(0,~y)u(0,~y)\Tilde{\phi}(0,y)\,\mathrm{d}y=0, \label{2.1} \\
    & \langle m^1,~\Tilde{\phi}_x\rangle+\langle n^1,~\Tilde{\phi}_y\rangle+\langle\wp,~\Tilde{\phi}_x\rangle+\langle w_{f_1}\delta_R,~\Tilde{\phi}\rangle+\int_0^\infty(\rho_0u_0^2+p_0)\Tilde{\phi}(0,y)\,\mathrm{d}y=0, \label{2.2} \\
    & \langle m^2,~\Tilde{\phi}_x\rangle+\langle n^2,~\Tilde{\phi}_y\rangle+\langle\wp,~\Tilde{\phi}_y\rangle+\langle w_{f_2}\delta_R,~\Tilde{\phi}\rangle+\int_0^\infty(\rho_0u_0v_0)\Tilde{\phi}(0,y)\,\mathrm{d}y=0,\label{2.3} \\
    & \langle m^3,~\Tilde{\phi}_x\rangle+\langle n^3,~\Tilde{\phi}_y\rangle+\int_0^\infty(\rho_0u_0E_0)\Tilde{\phi}(0,y)\,\mathrm{d}y=0. \label{2.4}
\end{align}

(\romannumeral2) $\varrho,  \wp$ are non-negative measures, $m^i, n^i\ (i=1, 2, 3)$ and $\wp$ are absolutely continuous with respect to $\varrho$, and there also exist  $\varrho$-a.e. functions $u, v, E$, such that the Radon-Nikodym derivatives satisfy
\begin{align}
    & u=\frac{\mathrm{d}m^0}{\mathrm{d}\varrho}=\frac{\mathrm{d}m^1/\mathrm{d}\varrho}
    {\mathrm{d}m^0/\mathrm{d}\varrho}=\frac{\mathrm{d}n^1/\mathrm{d}\varrho}{\mathrm{d}n^0/\mathrm{d}\varrho},\label{2.5} \\
    & v=\frac{\mathrm{d}n^0}{\mathrm{d}\varrho}=\frac{\mathrm{d}m^2/\mathrm{d}\varrho}{\mathrm{d}m^0/\mathrm{d}\varrho}
    =\frac{\mathrm{d}n^2/\mathrm{d}\varrho}{\mathrm{d}n^0/\mathrm{d}\varrho},\label{2.6}\\
    & E=\frac{\mathrm{d}m^3/\mathrm{d}\varrho}{\mathrm{d}m^0/\mathrm{d}\varrho}=
    \frac{\mathrm{d}n^3/\mathrm{d}\varrho}{\mathrm{d}n^0/\mathrm{d}\varrho}.\label{2.7}
\end{align}
In addition, $\varrho$ is absolutely continuous with respect to the Dirac measure $\delta_R$, with the Radon-Nikodym derivative $w_\rho(s)\doteq\mathrm{d}\varrho/\mathrm{d}\delta_R$ satisfying, respectively,
for Problem 1,
\begin{equation}
    w_{f_1}\dot{\psi}(s)+w_{f_2}\dot{b}(s)=-kw_\rho w^\alpha; \label{2.8}
\end{equation}
for Problem 2,
\begin{equation}
    w_{f_1}\dot{\psi}(s)+w_{f_2}\dot{b}(s)=\eta\cdot\Big(w_{f_1}\dot{b}(s)-w_{f_2}\dot{\psi}(s)\Big); \label{2.9}
\end{equation}
for Problem 3,
\begin{equation}
    u(\psi(s),b(s))=\mu u_1,\quad v(\psi(s),b(s))=\mu v_1. \label{2.10}
\end{equation}

(\romannumeral3) if $\varrho, \wp\ll\mathcal{L}^2$, and their Radon-Nikodym derivations are
\begin{equation}
    \rho=\frac{\mathrm{d}\varrho}{\mathrm{d}\mathcal{L}^2},\quad p=\frac{\mathrm{d}\wp}{\mathrm{d}\mathcal{L}^2},
\end{equation}
then \eqref{1.8} holds ($\mathcal{L}^2$-a.e.) and classical entropy condition is valid for discontinuities of functions $\rho, u, v, E$ in this case.

Then we call $(\varrho, u, v, E, w_{f_1}, w_{f_2})$ a Radon measure solution to Problem 1, Problem 2 and Problem 3, respectively.
\end{definition}

\begin{remark}{\em
The speed $w$ in \eqref{2.8} is given by \eqref{eq213new}, where $u(\psi(s), b(s)), v(\psi(s), b(s))$ are well-defined, thanks to \eqref{2.5}, \eqref{2.6}, and the requirement $\varrho\ll\delta_R$.
Also note that $u_1$ and $v_1$ in \eqref{2.10} are shown in \eqref{2.47} as $u, v$ there, calculated in an independent way.
    }
\end{remark}

\begin{remark}\label{rm32}{\em
    The conservation laws \eqref{2.1}-\eqref{2.4} are derived from equations \eqref{1.4}-\eqref{1.7} via integration-by-parts and replacing integrals by pairings. The measure $\wp$ in the term $\langle\wp,~\Tilde{\phi}_x\rangle$ in \eqref{2.2} (or $\langle\wp,~\Tilde{\phi}_y\rangle$ in \eqref{2.3}) stands for the static pressure, and the term  $\langle w_{f_1}\delta_R,~\Tilde{\phi}\rangle$ in \eqref{2.2} (or $\langle w_{f_2}\delta_R,~\Tilde{\phi}\rangle$ in \eqref{2.3}) stands for the  dynamic pressure. For hypersonic-limit flow, the static pressure vanishes, while the dynamic pressure always presents if there is a solid physical boundary $R$. We see an advantage of Radon measure solution is that it decomposes these two parts of pressure clearly.
}
    %In the hypersonic-limit problem, say $\epsilon=0$, the pressure measure $\wp$ actually equals to $0$.
\end{remark}
\begin{remark}{\em Notice that $(w_{f_1}, w_{f_2})$ means exactly the force $(f_1, f_2)$ on the gas by the ramp. For a motivation of this understanding, see Remark 1.5 in \cite{JQY}. (Be careful that  $\mathbf{n}$ is the inner normal vector in \cite{JQY}).  We emphasize that $w_{f_1}(s),~w_{f_2}(s)$ are now independent undetermined functions because of the additional boundary conditions on skin frictions. This is a major difference from Definition 1.1 in \cite{JQY}.
}
\end{remark}

\subsection{Infinite-thin shock layers} \label{sec32}
Let $\mathbb{I}_\Omega$ be the indicator function of a set $\Omega$, which means $\mathbb{I}_\Omega(x,y)=1$ if $(x,y)\in\Omega$, and $\mathbb{I}_\Omega(x,y)=0$ otherwise. It is natural to employ Dirac measures supported on the boundary $R$ to represent the infinite-thin shock layers in hypersonic-limit flow passing the ramp. Thus, we suppose a Radon measure solution of Problem 1 (Problem 2 and 3) determined by the following regular-singular decompositions:
\begin{align}
    & m^0=\rho_0u_0\mathcal{L}^2\mathbb{I}_\Omega+w_m^0(s)\delta_R=\mathcal{L}^2\mathbb{I}_\Omega+w_m^0(s)\delta_R,~n^0=\rho_0v_0\mathcal{L}^2\mathbb{I}_\Omega+w_n^0(s)\delta_R=w_n^0(s)\delta_R, \label{2.12} \\
    & m^1=\rho_0u_0^2\mathcal{L}^2\mathbb{I}_\Omega+w_m^1(s)\delta_R=\mathcal{L}^2\mathbb{I}_\Omega+w_m^1(s)\delta_R,\quad n^1=w_n^1(s)\delta_R,\label{2.13} \\
    & m^2=\rho_0u_0v_0\mathcal{L}^2\mathbb{I}_\Omega+w_m^2(s)\delta_R=w_m^2(s)\delta_R,\quad n^2=w_n^2(s)\delta_R,\label{2.14} \\
    & m^3=\rho_0u_0E_0\mathcal{L}^2\mathbb{I}_\Omega+w_m^3(s)\delta_R= w_m^3(s)\delta_R,\quad n^3=w_n^3(s)\delta_R,\label{2.15}\\
    & \varrho=\mathcal{L}^2\mathbb{I}_\Omega+w_\rho(s)\delta_R,\quad \wp=0.\label{2.16}
\end{align}
Here, $w_m^i,~w_n^i, ~(i=0,1,2,3)$ and $w_\rho(s)$ are all functions to be determined on the solid boundary $R$. And $w_\rho(s)$ represents the weight of mass, namely, the line density of gas on $R$ if there is mass concentration. We suppose $\wp=0$ in \eqref{2.16}, because if we assume that $\wp=0+w_p\delta_R$, where $w_p$ is the weight of static pressure on the surface of the ramp, then by calculating as in the following procedure, we also have $w_p=0$. Thereinafter we set $\phi(s,y)\doteq\Tilde{\phi}(\psi(s),y).$

We now derive the (ordinary) differential equations obeyed by these weights.
%Substituting \eqref{2.12}-\eqref{2.16} into  \eqref{2.1}-\eqref{2.4} respectively, and by \eqref{2.5}-\eqref{2.10}, it turns out that constructing the Radon measure solution of Problem 1 (2, 3) is reduced to finding solutions to some ordinary differential equations with respect to $s$. Explicitly,
Substituting \eqref{2.12} into \eqref{2.1}, one has
\begin{multline*}
 \int_\Omega\Tilde{\phi}_x\,\mathrm{d}x\mathrm{d}y+\int_0^\infty w_m^0(s)\Tilde{\phi}_x(\psi(s),~b(s)) \,\mathrm{d}s+\int_0^\infty w_n^0(s)\Tilde{\phi}_y(\psi(s),b(s)) \,\mathrm{d}s\\
 +\int_0^\infty \Tilde{\phi}(0,y)\,\mathrm{d}y=0,
\end{multline*}
and it follows
\begin{align*}
   - & \int_0^\infty\Tilde{\phi}(0,~y)\dot{b}(s)\,\mathrm{d}s+\int_0^\infty \phi(s,b(s))\dot{b}(s)\mathrm{d}s+\int_0^\infty w_m^0(s) \phi_s(s,~b(s))(\dot{\psi}(s))^{-1}\,\mathrm{d}s\\
    & ~~~~~~~~~~~~~~~~~~~~~~~~~~~~~~~~~~~~~~~~~~~~~~~~~~ \quad\qquad+\int_0^\infty w_n^0(s)\phi_y(s,~b(s)) \,\mathrm{d}s+\int_0^\infty \Tilde{\phi}(0,~y)\,\mathrm{d}y=0.
\end{align*}
Noticing that
\begin{align*}
    &\int_0^\infty w_m^0(s) \phi_s(s,~b(s))(\dot{\psi}(s))^{-1}\,\mathrm{d}s\\ \nonumber
    =& -w_m^0(0)\dot{\psi}^{-1}(0)\phi(0,~0)-\int_0^\infty\frac{\mathrm{d}\Big(w_m^0(s)(\dot{\psi}(s))^{-1}\Big)}
    {\mathrm{d}s}\phi(s,b(s))\,\mathrm{d}s\\ &~~~~~~~~~~~~~~~~~~~~~~~~~~~~~~~~~~~~~~~~~~~~~~~~~~~~~~~~-\int_0^\infty w_m^0(s)(\dot{\psi}(s))^{-1}\dot{b}(s)\phi_y(s,~b(s))\,\mathrm{d}s,
\end{align*}
we acquire
\begin{equation*}
    \begin{aligned}
w_m^0(0)\phi(0,~0)(\dot{\psi}(0))^{-1}+\int_0^\infty & (\frac{\mathrm{d}\Big(w_m^0(s)(\dot{\psi}(s))^{-1}\Big)}{\mathrm{d}s}-\dot{b}(s))\phi(s,~b(s))\,\mathrm{d}s\\
    &+\int_0^\infty\Big(\dot{b}(s)w_m^0(s)(\dot{\psi}(s))^{-1}-w_n^0(s) \Big)\phi_y(s,~b(s))\,\mathrm{d}s=0.
    \end{aligned}
\end{equation*}
By the arbitrariness of $\phi$, one has
\begin{align}
    & w_m^0(0)=0,\quad w_m^0(s)=b(s)\dot{\psi}(s),\\
    & w_n^0(s)=b(s)\dot{b}(s).
\end{align}
Similarly one gets
\begin{align}
    & w_m^3(0)=0,\quad w_m^3(s)=E_0b(s)\dot{\psi}(s), \\
    & w_n^3(s)=E_0b(s)\dot{b}(s).
\end{align}
Substituting \eqref{2.13}, \eqref{2.16} into \eqref{2.2}, and \eqref{2.14}, \eqref{2.16} into \eqref{2.3}, one also obtains that
\begin{align}
    & w_m^1(0)=0,\quad w_m^1(s)=\dot{\psi}(s)\big(b(s)+\int_0^s w_{f_1}(t)\,\mathrm{d}t \big),\\
    & w_n^1(s)=\dot{b}(s)\big(b(s)+\int_0^s w_{f_1}(t)\,\mathrm{d}t \big),
\end{align}
as well as
\begin{align}
    & w_m^2(0)=0,\quad w_m^2(s)=\dot{\psi}(s)\int_0^s w_{f_2}(t)\,\mathrm{d}t,\\
    & w_n^2(s)=\dot{b}(s)\int_0^s w_{f_2}(t)\,\mathrm{d}t.
\end{align}
Then from \eqref{2.5}-\eqref{2.7}, we infer that
\begin{align}
    & u(\psi(s),b(s))=\displaystyle{\frac{b(s)+\int_0^s w_{f_1}(t)\,\mathrm{d}t}{b(s)}}, \quad v(\psi(s),b(s))=\displaystyle{\frac{\int_0^s  w_{f_2}(t)\,\mathrm{d}t}{b(s)}}, \label{2.25}\\
    & \varrho(\psi(s),b(s))=\displaystyle{\frac{b(s)^2\dot{b}(s)}{\int_0^s w_{f_2}(t)\,\mathrm{d}t}}\delta_R,\quad E(\psi(s),b(s))=E_0. \label{2.26}
\end{align}
The above equations indicate that
    \begin{align}
     & \frac{b(s)+\int_0^s w_{f_1}(t)\mathrm{d}t}{b(s)}  =w(s)\dot{\psi}(s),  \label{2.27}\\
     & \frac{\int_0^s w_{f_2}(t)\mathrm{d}t}{b(s)}=w(s)\dot{b}(s),\label{2.28}\\
    &
    w_\rho(s)=\frac{w_n^0(s)}{v(s)}=\frac{b(s)}{w(s)}. \label{2.29}
\end{align}
Solving \eqref{2.27} and \eqref{2.28}, one gets
\begin{align}
    & w_{f_1}(s)=\dot{w}\dot{\psi}b(s)+w\ddot{\psi}b(s)+w\dot{\psi}\dot{b}-\dot{b}(s),\\
    & w_{f_2}(s)=\dot{w}\dot{b}b(s)+w\ddot{b}b(s)+w\dot{b}^2(s).
\end{align}
Therefore, thanks to $\dot{b}^2+\dot{\psi}^2\equiv1$ and $\dot{b}\ddot{b}+\dot{\psi}\ddot{\psi}\equiv0$,  we obtain
\begin{align}
&N(\psi(s),b(s))=-(w_{f_1},w_{f_2})\cdot\mathbf{n}={wb(\ddot{b}\dot{\psi}(s)-\dot{b}\ddot{\psi}(s))+\dot{b}^2(s),} \label{2.32}\\
&f(\psi(s),b(s))=-(w_{f_1},w_{f_2})\cdot\mathbf{t}={-\dot{w}b(s)-\dot{b}w(s)+\dot{b}\dot{\psi}(s).}\label{2.33}
\end{align}
\begin{remark}\label{rm34}{\em
    We require $N(\psi(s),b(s))>0$ to guarantee that the mass concentrates on the surface of the ramp. In fact, when $\dot{b}(0)>0,$ $N(\psi(s),b(s))>0$ if $s$ is within a small range from 0.
    }
\end{remark}

\subsection{Main results and their proofs}\label{sec33}

For \textsc{Problem 1}, \eqref{2.32}, \eqref{2.33} together with \eqref{2.8} yield that
\begin{framed}
    \begin{equation}
     \dot{w}b(s)+\dot{b}w(s)-\dot{b}\dot{\psi}(s)=-kw_\rho(s) w^\alpha(s)=-kb(s)w^{\alpha-1}(s). \label{2.34}
\end{equation}
\end{framed}

For $\alpha=1$, the above equation becomes
\begin{equation*}
    (wb)^\cdot(s)=\dot{b}\dot{\psi}(s)-kb(s),\quad wb(0)=0,
\end{equation*}
and one solves it to have
\begin{equation}
w(s)=\frac{1}{b(s)}\int_0^s(\dot{b}\dot{\psi}(t)-kb(t))\,\mathrm{d}t. \label{2.35}
\end{equation}
The definition \eqref{eq213new} of $w$ demands that $w(s)\geq0$. However, by \eqref{2.35}, $w(s)$ might be negative when $s$ is sufficiently large,\footnote{
For example, take $\tilde{b}(x)=x$, i.e., $b(s)=\frac{\sqrt{2}}{2}s$. Then $\dot{b}(s)=\frac{\sqrt{2}}{2}$, and $\dot{\psi}(s)=\frac{1}{\sqrt{2}}$. For $k=1$, one has $\dot{b}\dot{\psi}(t)-kb(t))=\frac{1}{2}-\frac{\sqrt{2}}{2}t<0$ for $t\in(\frac{\sqrt{2}}{2}, \infty)$.} which indicates that the case $\alpha=1$ might not be a reasonable model to describe the skin-friction.

For $\alpha=2$, one solves the corresponding equation
\begin{equation*}
    (wb)^\cdot(s)=-k(wb(s))+\dot{b}\dot{\psi}(s)
\end{equation*}
to have
\begin{equation}
    w(s)=\frac{1}{b(s)\mathrm{e}^{ks}}\int_0^s\dot{b}\dot{\psi}(t)\mathrm{e}^{kt}\,\mathrm{d}t.\label{2.36}
\end{equation}

When $\alpha$ takes other values, there still remains a problem to solve \eqref{2.34}.~One can use numeral methods to give approximate solutions.

\medskip
For \textsc{Problem 2}, \eqref{2.32}, \eqref{2.33} and the boundary condition \eqref{2.9} yield
\begin{framed}
\begin{equation}
    \dot{w}b(s)+\dot{b}w(s)-\dot{b}\dot{\psi}(s)=-\eta\cdot\Big(wb(\ddot{b}\dot{\psi}(s)-\dot{b}\ddot{\psi}(s))+\dot{b}^2(s)\Big).\label{2.37}
\end{equation}
\end{framed}
Simplifying the above equation, we have
\begin{equation*}
(wb)^\cdot(s)=\Big(-\eta\ddot{b}\dot{\psi}(s)+\eta\dot{b}\ddot{\psi}(s)\Big)wb(s)-\eta\dot{b}^2(s)+\dot{b}\dot{\psi}(s).
\end{equation*}
Set $$I(s)\doteq\exp(\int_0^s-\eta(\ddot{b}\dot{\psi}(t)-\dot{b}\ddot{\psi}(t))\,\mathrm{d}t).$$
Then the solution to \eqref{2.37} is
\begin{equation}
    w(s)=\frac{I(s)}{b(s)}\int_0^s\Big(-\eta\dot{b}^2(t)+\dot{b}\dot{\psi}(t)\Big)I(t)^{-1}\,\mathrm{d}t.
\end{equation}

%\begin{align}
 %   & \Tilde{I}(x)=I(s(x))=\exp(\int_0^x\frac{\eta\Tilde{b}''(t)}{1+\Tilde{b}'(t)^2}\mathrm{d}t),\\
    %& \Tilde{w}(x)=\frac{\Tilde{I}(x)}{\Tilde{b}(x)}\int_0^x\frac{\Tilde{b}'(t)(\eta\Tilde{b}'(t)+1)}{\Tilde{I}(t)\sqrt{1+\Tilde{b}'(t)^2}}\mathrm{d}t.
%\end{align}
Therefore, turning back to the $(x,y)$-coordinates, and also recalling $s(x)=\int_0^x\sqrt{1+\Tilde{b}'(t)^2}\,\mathrm{d}t$, we have the following theorems.

\begin{theorem}\label{thm1}
For $\alpha=2$, set $H_k(x)\doteq\int_0^x\frac{\Tilde{b}'(t)}{\sqrt{1+\Tilde{b}'(t)^2}}\mathrm{e}^{ks(t)}\,\mathrm{d}t$. Then Problem 1 admits a Radon measure solution given by
    \begin{align}
        & \varrho=\mathcal{L}^2\mathbb{I}_\Omega+\frac{\Tilde{b}(x)^2\mathrm{e}^{ks(x)}}{H_k(x)}~\delta_R,\quad E=E_0\mathbb{I}_\Omega+E_0\mathbb{I}_R,\\
         & u=\mathbb{I}_\Omega+\frac{H_k(x)}{\Tilde{b}(x)\sqrt{1+\Tilde{b}'(x)^2}\mathrm{e}^{ks(x)}}~\mathbb{I}_R,\quad v=\frac{\Tilde{b}'(x)H_k(x)}{\Tilde{b}(x)\sqrt{1+\Tilde{b}'(x)^2}\mathrm{e}^{ks(x)}}~\mathbb{I}_R.
    \end{align}
Moreover, the normal pressure $N$ and skin-friction $f$ on the ramp are
\begin{equation}
    N(x,\Tilde{b}(x))=\frac{\Tilde{b}''(t)H_k(x)}{(1+\Tilde{b}'(t)^2)^\frac{3}{2}\mathrm{e}^{ks(x)}}+\frac{\Tilde{b}'(t)^2}{1+\Tilde{b}'(t)^2},\quad f(x,\Tilde{b}(x))=\frac{k H_k(x)}{\mathrm{e}^{ks(x)}}. \label{2.41}
\end{equation}
\end{theorem}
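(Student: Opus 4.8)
The plan is to verify directly that the fields displayed in the theorem, once assembled into the regular--singular decomposition \eqref{2.12}--\eqref{2.16}, constitute a Radon measure solution in the sense of the Definition, and then merely rewrite the intrinsic formulas already obtained in Section \ref{sec32} in terms of the Cartesian variable $x$. The starting observation is that the manipulations in Section \ref{sec32} --- substitute the ansatz into the weak equations \eqref{2.1}--\eqref{2.4}, integrate by parts, invoke arbitrariness of $\phi$ --- are reversible: the identities \eqref{2.25}--\eqref{2.33} together with any $w$ solving the key ODE \eqref{2.34} produce measures $m^i,n^i,\varrho,\wp$ that satisfy \eqref{2.1}--\eqref{2.4}, the Radon--Nikodym relations \eqref{2.5}--\eqref{2.7}, and the friction law \eqref{2.8}, with the boundary terms at $s=0$ killed by $w_m^i(0)=0$ and those at the far end killed by compactness of $\mathrm{supp}\,\Tilde\phi$. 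Since $\wp=0$, condition (iii) with $\epsilon=0$ is automatic ($p\equiv0$), and the interior fields $\rho\equiv1$, $u\equiv1$, $v\equiv0$, $E\equiv E_0$ being constant on $\Omega$ there are no interior discontinuities, so the classical entropy condition holds vacuously. Hence the actual work reduces to (a) solving \eqref{2.34} for $\alpha=2$, and (b) transferring to the $x$-variable.

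For (a), with $\alpha=2$ equation \eqref{2.34} becomes $(wb)^\cdot(s)=-k\,(wb)(s)+\dot b\dot\psi(s)$ with $wb(0)=0$; the integrating factor $\mathrm e^{ks}$ yields $wb(s)=\mathrm e^{-ks}\int_0^s\dot b\dot\psi(t)\mathrm e^{kt}\,\mathrm dt$, which is \eqref{2.36}. For (b), from $s'(x)=\sqrt{1+\Tilde b'(x)^2}$ one has $\dot\psi(s)=1/\sqrt{1+\Tilde b'(x)^2}$, $\dot b(s)=\Tilde b'(x)/\sqrt{1+\Tilde b'(x)^2}$, $b(s)=\Tilde b(x)$; the substitution $t=s(\tau)$ turns $\int_0^s\dot b\dot\psi(t)\mathrm e^{kt}\,\mathrm dt$ into precisely $H_k(x)=\int_0^x\Tilde b'(\tau)(1+\Tilde b'(\tau)^2)^{-1/2}\mathrm e^{ks(\tau)}\,\mathrm d\tau$, so $w(s)=H_k(x)\big/\big(\Tilde b(x)\mathrm e^{ks(x)}\big)$. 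Plugging this into \eqref{2.25}, \eqref{2.29} and \eqref{2.26} gives the stated $u,v,\varrho,E$; for the surface quantities one uses \eqref{2.33} together with the fact that, by \eqref{2.34} with $\alpha=2$, $f=-(\dot wb+\dot bw-\dot b\dot\psi)=k\,wb=kH_k(x)\mathrm e^{-ks(x)}$, and one uses \eqref{2.32} together with the identity that the signed curvature $\ddot b\dot\psi(s)-\dot b\ddot\psi(s)$ of the graph equals $\Tilde b''(x)(1+\Tilde b'(x)^2)^{-3/2}$, yielding the expression \eqref{2.41} for $N$.

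The point that deserves genuine attention --- rather than the algebra, which is the forward derivation of Section \ref{sec32} run in reverse --- is \emph{admissibility} of the constructed object. One must check $w(s)\ge0$, which amounts to $H_k(x)\ge0$ and follows from $\Tilde b'\ge0$; that the Dirac weight $w_\rho(s)=\Tilde b(x)^2\mathrm e^{ks(x)}/H_k(x)$ of $\varrho$ is finite and strictly positive so that it is a bona fide nonnegative line density, which requires $H_k(x)>0$ for $x>0$, i.e. $\Tilde b'\not\equiv0$ near the origin; and that $N\ge0$ and $f\ge0$ as demanded in Section \ref{sec2}. The bound $f=kH_k(x)\mathrm e^{-ks(x)}\ge0$ is immediate; $N\ge0$ holds near $x=0$ whenever $\Tilde b'(0)>0$ by Remark \ref{rm34}, and globally under a suitable sign/convexity hypothesis on $\Tilde b''$, which should be recorded alongside the statement. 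Once these positivity facts are in place, the remaining items --- that $(\varrho,u,v,E,w_{f_1},w_{f_2})$ verify every clause of the Definition --- are exactly the reversible computations already carried out, so the proof closes.
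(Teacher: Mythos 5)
Your proposal is correct and follows essentially the same route as the paper: solve the framed ODE \eqref{2.34} for $\alpha=2$ by the integrating factor $\mathrm{e}^{ks}$ to get \eqref{2.36}, change to the $x$-variable so that $\int_0^s\dot b\dot\psi(t)\mathrm{e}^{kt}\,\mathrm{d}t$ becomes $H_k(x)$, and read off $u,v,w_\rho,N,f$ from \eqref{2.25}, \eqref{2.29}, \eqref{2.26}, \eqref{2.32}, \eqref{2.33}. Your additional remarks on reversibility of the Section \ref{sec32} derivation and on admissibility ($H_k>0$, $w\ge0$, $N\ge0$, $f\ge0$) are sound and make explicit points the paper leaves implicit (cf.\ Remark \ref{rm34}), but they do not change the argument.
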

\begin{proof}
Direct calculation with \eqref{2.36} shows that
\begin{align*}
     & u(\psi(s),b(s))=w(s)\dot{\psi}(s)=\frac{\dot{\psi}(s)}{b(s)\mathrm{e}^{ks}}\int_0^s\dot{b}\dot{\psi}(t)\mathrm{e}^{kt}\,\mathrm{d}t, \\
     & v(\psi(s),b(s))=w(s)\dot{b}(s)=\frac{\dot{b}(s)}{b(s)\mathrm{e}^{ks}}\int_0^s\dot{b}\dot{\psi}(t)\mathrm{e}^{kt}\,\mathrm{d}t,\\
    & w_\rho(s)=\frac{b(s)}{w(s)}=b^2(s)\mathrm{e}^{ks(x)}\Big(\int_0^s\dot{b}\dot{\psi}(t)\mathrm{e}^{kt}\,\mathrm{d}t\Big)^{-1}.
\end{align*}
Using the $x$ parameter, we infer
\begin{equation*}
    \Tilde{w}(x)=w(s(x))=\frac{1}{\Tilde{b}(x)\mathrm{e}^{ks(x)}}\int_0^x\frac{\Tilde{b}'(t)}{\sqrt{1+\Tilde{b}'(t)^2}}\mathrm{e}^{ks(t)}
    \,\mathrm{d}t=\frac{H_k(x)}{\Tilde{b}(x)\mathrm{e}^{ks(x)}}.
\end{equation*}
Hence,
\begin{align*}
    & u(x,\Tilde{b}(x))=\frac{H_k(x)}{\Tilde{b}(x)\sqrt{1+\Tilde{b}'(x)^2}\mathrm{e}^{ks(x)}},\quad  v(x,\Tilde{b}(x))=\frac{\Tilde{b}'(x)H_k(x)}{\Tilde{b}(x)\sqrt{1+\Tilde{b}'(x)^2}\mathrm{e}^{ks(x)}},\\
    & w_\rho(s(x))=\Tilde{b}(x)^2\mathrm{e}^{ks(x)}\Big(H_k(x)\Big)^{-1}.
\end{align*}
From \eqref{2.32} and \eqref{2.33}, we have
\begin{align*}
     & N(x,\Tilde{b}(x))=N(\psi(s),b(s))=\frac{\Tilde{b}''(t)}{(1+\Tilde{b}'(t)^2)^\frac{3}{2}\mathrm{e}^{ks(x)}}H_k(x)+\frac{\Tilde{b}'(t)^2}{1+\Tilde{b}'(t)^2},\\
    & f(x,\Tilde{b}(x))=f(\psi(s),b(s))=k\Tilde{b}\Tilde{w}(x)=\frac{k H_k(x)}{\mathrm{e}^{ks(x)}}.
\end{align*}
\end{proof}
\begin{theorem}\label{thm2}
    Problem 2 has a Radon measure solution given by
    \begin{align}
    & \varrho=\mathcal{L}^2\mathbb{I}_\Omega+\frac{\Tilde{b}^2(x)}{\Tilde{I}(x)W_\eta(x)}\delta_R,\quad E=E_0\mathbb{I}_\Omega+E_0\mathbb{I}_R, \\
    & u=\mathbb{I}_\Omega+\frac{\Tilde{I}(x)}{\Tilde{b}(x)\sqrt{1+\Tilde{b}'(x)^2}}W_\eta(x)~\mathbb{I}_R, \quad v=\frac{\Tilde{b}'(x)\Tilde{I}(x)}{\Tilde{b}(x)\sqrt{1+\Tilde{b}'(x)^2}}W_\eta(x)~\mathbb{I}_R,
    \end{align}
in which
$$\Tilde{I}(x)=I_\eta(s(x))=\exp(\int_0^x\frac{-\eta\Tilde{b}''(t)}{1+\Tilde{b}'(t)^2}\,\mathrm{d}t),\quad W_\eta(x)=\int_0^x\frac{\Tilde{b}'(t)(-\eta\Tilde{b}'(t)+1)}{\Tilde{I}(t)\sqrt{1+\Tilde{b}'(t)^2}}\,\mathrm{d}t.$$
Moreover, the normal pressure $N$ and skin-friction  $f$ on the ramp are
\begin{align}
    & N(x,\Tilde{b}(x))=\frac{\Tilde{I}(x)\Tilde{b}''(x)}{(1+\Tilde{b}'(x)^2)^\frac{3}{2}}W_\eta(x)+\frac{\Tilde{b}'(x)^2}{1+\Tilde{b}'(x)^2},\label{2.44} \\
    & f(x,\Tilde{b}(x))=\eta N=\eta \Big[\frac{\Tilde{I}(x)\Tilde{b}''(x)}{(1+\Tilde{b}'(x)^2)^\frac{3}{2}}W_\eta(x)+\frac{\Tilde{b}'(x)^2}{1+\Tilde{b}'(x)^2}\Big].\label{eq345}
\end{align}
\end{theorem}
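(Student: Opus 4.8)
The plan is to run exactly the same machinery that produced Theorem~\ref{thm1}, but with the ODE \eqref{2.37} for Problem~2 in place of the $\alpha=2$ case of \eqref{2.34}. First I would recall from Section~\ref{sec32} that, under the regular-singular ansatz \eqref{2.12}--\eqref{2.16}, every weight $w_m^i, w_n^i$ and the line density $w_\rho$ is expressed through $b(s)$, $\psi(s)$ and the single scalar $w(s)$ via \eqref{2.25}--\eqref{2.29}, and that the skin-friction constraint \eqref{2.9} reduces the whole system to the scalar ODE \eqref{2.37}. The excerpt has already integrated that ODE: with $I(s)=\exp\big(-\eta\int_0^s(\ddot b\dot\psi-\dot b\ddot\psi)\,\mathrm dt\big)$ one gets $w(s)=\frac{I(s)}{b(s)}\int_0^s(-\eta\dot b^2(t)+\dot b\dot\psi(t))I(t)^{-1}\,\mathrm dt$, together with $wb(0)=0$ which makes $w(s)$ well-defined at $s=0$ (the numerator vanishes to the same order as $b(s)$). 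So the substantive content of the theorem is purely bookkeeping: translating this $s$-parametrized formula into the $x$-variable and reading off $\varrho, u, v, E, N, f$.

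Concretely, the steps are: (1) change variables via $s=s(x)$, $\mathrm ds=\sqrt{1+\tilde b'(x)^2}\,\mathrm dx$, $\dot b=\tilde b'/\sqrt{1+\tilde b'^2}$, $\dot\psi=1/\sqrt{1+\tilde b'^2}$, and the identity $\ddot b\dot\psi-\dot b\ddot\psi = \tilde b''/(1+\tilde b'^2)^{3/2}\cdot\sqrt{1+\tilde b'^2}\cdot(\text{Jacobian factor})$ — the point being that $\int_0^s(\ddot b\dot\psi-\dot b\ddot\psi)\,\mathrm dt$ becomes $\int_0^x \tilde b''(t)/(1+\tilde b'(t)^2)\,\mathrm dt$, so $I(s(x))=\tilde I(x)$ as defined; likewise the integral $\int_0^s(-\eta\dot b^2+\dot b\dot\psi)I^{-1}\,\mathrm dt$ becomes $\int_0^x \tilde b'(t)(-\eta\tilde b'(t)+1)/(\tilde I(t)\sqrt{1+\tilde b'(t)^2})\,\mathrm dt = W_\eta(x)$. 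Hence $\tilde w(x)=w(s(x))=\tilde I(x)W_\eta(x)/\tilde b(x)$. (2) Then $u=w\dot\psi$, $v=w\dot b$ from \eqref{2.27}--\eqref{2.28} give the stated $\mathbb I_R$-parts of $u,v$; $w_\rho=b/w$ from \eqref{2.29} gives the Dirac weight $\tilde b^2/(\tilde I W_\eta)$; $E\equiv E_0$ from \eqref{2.26}. (3) Finally $N$ and $f$: plug $\tilde w$ into \eqref{2.32} and \eqref{2.33}, use the curvature identity once more, and use \eqref{2.9} (i.e. $f=\eta N$) for \eqref{eq345}; this should reproduce \eqref{2.44}--\eqref{eq345} after collecting the $\tilde b''$-term and the $\tilde b'^2/(1+\tilde b'^2)$-term.

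The only genuine obstacle is the differential-geometric identity needed to convert the $\ddot{(\cdot)}$ quantities into $\tilde b''$ and to handle the arc-length Jacobian consistently — a routine but error-prone chain-rule computation, exactly the analogue of the step hidden inside the proof of Theorem~\ref{thm1} where $N(\psi(s),b(s))$ is rewritten with $\tilde b''$. I would verify it once by writing $x=\psi(s)$, $\dot\psi=1/\sqrt{1+\tilde b'(x)^2}$, $\dot b=\tilde b'(x)\dot\psi$, differentiating again in $s$, and checking $\ddot b\dot\psi-\dot b\ddot\psi=\tilde b''(x)\,\dot\psi^{4}\cdot\sqrt{1+\tilde b'^2}$... more precisely that the combination appearing under the integral sign for $I$ integrates to $\int_0^x\tilde b''/(1+\tilde b'^2)\,\mathrm dt$. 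Everything else is substitution. One should also note in passing (as Remark~\ref{rm34} anticipates) that the construction is only claimed as ``a Radon measure solution'', so no uniqueness argument is required, and positivity of $N$ — needed for genuine mass concentration — holds at least near $s=0$ when $\tilde b'(0)>0$; I would not belabor this since the theorem statement does not assert it globally.
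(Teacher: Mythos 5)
Your proposal is correct and is exactly the paper's route: the paper proves Theorem \ref{thm2} by the same computation as Theorem \ref{thm1} (it explicitly omits the details), i.e.\ take the integrated ODE $w(s)=\frac{I(s)}{b(s)}\int_0^s(-\eta\dot b^2+\dot b\dot\psi)I^{-1}\,\mathrm dt$, convert to the $x$-variable using $\dot\psi=1/\sqrt{1+\tilde b'^2}$, $\dot b=\tilde b'/\sqrt{1+\tilde b'^2}$, $\ddot b\dot\psi-\dot b\ddot\psi=\tilde b''/(1+\tilde b'^2)^{3/2}$, and read off $u,v,\varrho,E$ from \eqref{2.25}--\eqref{2.29} and $N$, $f=\eta N$ from \eqref{2.32}--\eqref{2.33} with \eqref{2.9}. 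Your identity checks (yielding $\tilde I(x)$ and $W_\eta(x)$) are the only nontrivial steps and they are carried out correctly.
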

\begin{proof}
    The proof is similar to Theorem 1, so we omit the straightforward calculations here.
\end{proof}

\medskip
We now turn to \textsc{Problem} 3.
Letting $k=0$ in Theorem \ref{sec31} (or $\eta=0$ in Theorem \ref{thm2}), and note that $H_0(x)=W_0(x)=\int_0^x\frac{\Tilde{b}'(t)}{\sqrt{1+\Tilde{b}'(t)^2}}\,\mathrm{d}t$, we obtain the corresponding solution
\begin{align}
    & \varrho=\mathcal{L}^2\mathbb{I}_\Omega+\frac{\Tilde{b}(x)^2}{H_0(x)}~\delta_R,\quad E=E_0\mathbb{I}_\Omega+E_0\mathbb{I}_R, \label{2.46}\\
    & u=\mathbb{I}_\Omega+\frac{H_0(x)}{\Tilde{b}(x)\sqrt{1+\Tilde{b}'(x)^2}}~\mathbb{I}_R,\quad v=\frac{\Tilde{b}'(x)H_0(x)}{\Tilde{b}(x)\sqrt{1+\Tilde{b}'(x)^2}}~\mathbb{I}_R.  \label{2.47}
\end{align}
with the classical Newton-Busemann law \cite[p.71]{new}
\begin{equation}
    N=\frac{\Tilde{b}''(x)}{(1+\Tilde{b}'(x)^2)^\frac{3}{2}}H_0(x)+\frac{\Tilde{b}'(x)^2}{1+\Tilde{b}'(x)^2}. \label{2.48}
\end{equation}
For Problem 3, we assume that the velocity is influenced by $\mu$, for $0<\mu<1$, and the boundary condition \eqref{2.10}  is written explicitly as
\begin{equation}
u(x,\Tilde{b}(x))=\frac{\mu H_0(x)}{\Tilde{b}(x)\sqrt{1+\Tilde{b}'(x)^2}},\quad v(x,\Tilde{b}(x))=\frac{\mu\Tilde{b}'(x)H_0(x)}{\Tilde{b}(x)\sqrt{1+\Tilde{b}'(x)^2}}. \label{2.49}
\end{equation}
We could now specify Problem 3 as follows:

\textsc{Problem $3'$}: Find a solution to \eqref{1.4}-\eqref{1.7}, \eqref{1.9}, \eqref{1.10}($\epsilon=0$), \eqref{1.11} and \eqref{2.49}.

\begin{theorem}\label{thm3}
Problem $3'$ has a Radon measure solution given by
   \begin{align}
    & \varrho=\mathcal{L}^2\mathbb{I}_\Omega+\displaystyle{\frac{\Tilde{b}^2(x)}{\mu  H_0(x)}}\delta_R,\quad E=E_0\mathbb{I}_\Omega+E_0\mathbb{I}_R, \\
    & u=\mathbb{I}_\Omega+\displaystyle{\frac{\mu H_0(x)}{\Tilde{b}(x)\sqrt{1+\Tilde{b}'(x)^2}}}\mathbb{I}_R, \quad v=\displaystyle{\frac{\mu H_0(x)\Tilde{b}'(x)}{\Tilde{b}(x)\sqrt{1+\Tilde{b}'(x)^2}}} \mathbb{I}_R.
\end{align}
   In addition, the normal pressure and skin-friction on the boundary are given by
\begin{align}
    & N(x,\Tilde{b}(x))=\mu H_0(x)\frac{\Tilde{b}''(x)}{(1+\Tilde{b}'(x)^2)^\frac{3}{2}}+\frac{\Tilde{b}'(x)^2}{1+\Tilde{b}'(x)^2},\label{2.52}\\
    & f(x,\Tilde{b}(x))=(1-\mu)\frac{\Tilde{b}'(x)}{1+\Tilde{b}'(x)^2}.\label{eq353}
\end{align}
\end{theorem}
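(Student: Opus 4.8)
\textbf{Proof proposal for Theorem \ref{thm3}.}

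The plan is to treat Problem $3'$ exactly as Problems 1 and 2 were treated, but now with $w_{f_1}$ and $w_{f_2}$ regarded as the unknowns to be read off from the prescribed velocity boundary condition \eqref{2.49}, rather than from a friction law. First I would recall from \eqref{2.25} that, for any Radon measure solution with the regular--singular decomposition \eqref{2.12}--\eqref{2.16}, the traces of the velocity on $R$ satisfy
\begin{equation*}
u(\psi(s),b(s))=\frac{b(s)+\int_0^s w_{f_1}(t)\,\mathrm{d}t}{b(s)},\qquad
v(\psi(s),b(s))=\frac{\int_0^s w_{f_2}(t)\,\mathrm{d}t}{b(s)},
\end{equation*}
together with $\varrho(\psi(s),b(s))=\big(b(s)^2\dot b(s)/\int_0^s w_{f_2}\,\mathrm{d}t\big)\delta_R$ and $E\equiv E_0$ from \eqref{2.26}. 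These identities come solely from substituting the decomposition into the conservation laws \eqref{2.1}--\eqref{2.4} and do not use any particular closure for the friction, so they are available here verbatim.

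Next I would impose \eqref{2.49}. Matching the first identity above to $u(x,\tilde b(x))=\mu H_0(x)/(\tilde b(x)\sqrt{1+\tilde b'(x)^2})$ and converting from the arc-length parameter $s$ to $x$ via $\mathrm{d}s=\sqrt{1+\tilde b'(x)^2}\,\mathrm{d}x$ gives
\begin{equation*}
\int_0^{s(x)} w_{f_1}(t)\,\mathrm{d}t=\tilde b(x)\Big(\frac{\mu H_0(x)}{\tilde b(x)\sqrt{1+\tilde b'(x)^2}}\sqrt{1+\tilde b'(x)^2}-1\Big)
\end{equation*}
after accounting for $w(s)\dot\psi(s)=u$, i.e. $\int_0^s w_{f_1}=\mu H_0(x)-\tilde b(x)$ in the $x$--variable; similarly the second identity gives $\int_0^s w_{f_2}=\mu\tilde b'(x)H_0(x)$. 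Differentiating in $x$ (and dividing by $\sqrt{1+\tilde b'^2}$ to return to $s$) produces explicit formulas for $w_{f_1}(s),w_{f_2}(s)$; plugging $\int_0^s w_{f_2}=\mu\tilde b'(x)H_0(x)$ into \eqref{2.26} yields $\varrho=\mathcal L^2\mathbb I_\Omega+\tilde b^2(x)/(\mu H_0(x))\,\delta_R$, and the stated $u,v,E$ follow immediately. Then I would compute $N$ and $f$ from \eqref{e1} and \eqref{e10} with $(f_1,f_2)=(w_{f_1},w_{f_2})$: since $H_0'(x)=\tilde b'(x)/\sqrt{1+\tilde b'(x)^2}$, the term $\mu\tilde b'(x)H_0(x)$ differentiates neatly, and using $\dot b^2+\dot\psi^2\equiv1$, $\dot b\ddot b+\dot\psi\ddot\psi\equiv0$ exactly as in the derivation of \eqref{2.32}--\eqref{2.33} should collapse the expression for $N$ to $\mu H_0(x)\tilde b''(x)/(1+\tilde b'(x)^2)^{3/2}+\tilde b'(x)^2/(1+\tilde b'(x)^2)$ and the expression for $f$ to $(1-\mu)\tilde b'(x)/(1+\tilde b'(x)^2)$. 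Finally I would check condition (iii) of the definition is vacuous here (the singular part is supported on $R$, $\wp=0$, and there is no $\mathcal L^2$-a.c. discontinuity to test the entropy condition against), and note $N\ge0$ near $s=0$ as in Remark \ref{rm34}, so the constructed sextuple is a bona fide Radon measure solution.

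The only genuine obstacle is bookkeeping: one must be careful with the two competing uses of the parameter --- the unknowns $w_{f_1},w_{f_2}$ naturally live in the $s$--variable (they appear in $\int_0^s w_{f_i}(t)\,\mathrm{d}t$), while the final answer is to be expressed in $x$, and the Jacobian $\sqrt{1+\tilde b'(x)^2}$ must be inserted in exactly the right places when differentiating the integral conditions. Beyond that, everything is a direct specialization of the already-established machinery (in particular \eqref{2.25}, \eqref{2.26}, \eqref{2.32}, \eqref{2.33}), so the proof reduces to the routine calculations that the authors, as in Theorem \ref{thm2}, may reasonably abbreviate. Indeed I expect the written proof to say little more than ``set $k=0$, $\eta=0$ and replace the velocity trace by its $\mu$--scaled value in \eqref{2.25}, then recompute $N,f$ via \eqref{2.32}--\eqref{2.33}.''
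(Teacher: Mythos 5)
Your overall strategy is exactly the paper's: read $w_{f_1},w_{f_2},w_\rho$ off from \eqref{2.25}--\eqref{2.26} combined with the prescribed trace \eqref{2.49}, then compute $N,f$ as in \eqref{2.32}--\eqref{2.33}; condition (iii) is indeed vacuous here. However, the key matching step is carried out incorrectly, and the error is precisely the bookkeeping issue you warn about. Equation \eqref{2.25} already expresses the trace of $u$ (not the speed $w$), so matching it to \eqref{2.49} gives, with $b(s)=\Tilde{b}(x)$,
\begin{equation*}
\int_0^{s(x)}w_{f_1}(t)\,\mathrm{d}t=\Tilde{b}(x)\Big(\frac{\mu H_0(x)}{\Tilde{b}(x)\sqrt{1+\Tilde{b}'(x)^2}}-1\Big)=\frac{\mu H_0(x)}{\sqrt{1+\Tilde{b}'(x)^2}}-\Tilde{b}(x),
\qquad
\int_0^{s(x)}w_{f_2}(t)\,\mathrm{d}t=\frac{\mu \Tilde{b}'(x)H_0(x)}{\sqrt{1+\Tilde{b}'(x)^2}};
\end{equation*}
no Jacobian enters at this stage (it only appears afterwards, when you differentiate in $x$ to recover $w_{f_i}(s(x))$, via $\mathrm{d}/\mathrm{d}s=(1+\Tilde{b}'^2)^{-1/2}\,\mathrm{d}/\mathrm{d}x$). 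Your extra factor $\sqrt{1+\Tilde{b}'(x)^2}$, justified by ``accounting for $w\dot\psi=u$'', conflates $u$ with $w$: by \eqref{2.27} one has $u=w\dot\psi=(b+\int_0^s w_{f_1})/b$ already, so nothing remains to be accounted for. Your values $\int_0^s w_{f_1}=\mu H_0-\Tilde{b}$ and $\int_0^s w_{f_2}=\mu\Tilde{b}'H_0$ correspond instead to the traces $u=\mu H_0/\Tilde{b}$, $v=\mu\Tilde{b}'H_0/\Tilde{b}$, which violate \eqref{2.49}; and carried honestly through \eqref{2.26} they give the singular weight $\Tilde{b}^2/(\mu H_0\sqrt{1+\Tilde{b}'^2})$, not the stated $\Tilde{b}^2/(\mu H_0)$, while $N$ would come out as $\mu H_0\Tilde{b}''/(1+\Tilde{b}'^2)+\Tilde{b}'^2/(1+\Tilde{b}'^2)$ rather than \eqref{2.52}. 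A one-line check: for $\Tilde{b}(x)=x\tan\theta$ one has $H_0(x)=x\sin\theta$, and the correct $\int_0^s w_{f_1}=-\mu\, x\tan\theta\sin^2\theta-(1-\mu)x\tan\theta\cdot 0$ evaluated at $\mu=1$ is $-x\tan\theta\sin^2\theta$, whereas your formula gives $x(\sin\theta-\tan\theta)$, which differs.

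The fix is immediate: drop the spurious factor, differentiate the corrected integrals to get exactly the $w_{f_1},w_{f_2}$ the paper writes down, and everything else you describe goes through. A cleaner way to finish (equivalent to what the paper does implicitly) is to note that \eqref{2.49} says $w(s)=\mu H_0(x)/\Tilde{b}(x)$, i.e.\ $w(s)b(s)=\mu H_0(x)$, so \eqref{2.32}--\eqref{2.33} give directly $N=\mu H_0\,\Tilde{b}''/(1+\Tilde{b}'^2)^{3/2}+\Tilde{b}'^2/(1+\Tilde{b}'^2)$ and $f=\dot b\dot\psi-(wb)^{\cdot}=(1-\mu)\Tilde{b}'/(1+\Tilde{b}'^2)$, without ever isolating $w_{f_1},w_{f_2}$.
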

\begin{proof}
    From \eqref{2.25}, \eqref{2.26}, and \eqref{2.49}, we have
    \begin{align*}
        & w_{f_1}(s(x))=\mu\Big(\frac{\Tilde{b}'(x)}{(1+\Tilde{b}'(x)^2)^\frac{3}{2}}-\frac{\Tilde{b}'(x)\Tilde{b}''(x)H_0(x)}{(1+\Tilde{b}'(x)^2)^2}\Big)-\frac{\Tilde{b}'(x)}{\sqrt{1+\Tilde{b}'(x)^2}},\\
        & w_{f_2}(s(x))=\mu \Big(\frac{\Tilde{b}''(x)H_0(x)}{1+\Tilde{b}'(x)^2}-\frac{\Tilde{b}'(x)\Tilde{b}''(x)H_0(x)}{(1+\Tilde{b}'(x)^2)^2}+\frac{\Tilde{b}'(x)^2}{(1+\Tilde{b}'(x)^2)^\frac{3}{2}}\Big),\\
        & w_\rho(s(x))=\frac{\Tilde{b}(x)}{\mu H_0(x)}.
    \end{align*}
    Therefore, the pressure $N$ and the surface friction $f$ on the ramp are
\begin{align*}
       & N=\mu H_0(x)\frac{\Tilde{b}''(x)}{(1+\Tilde{b}'(x)^2)^\frac{3}{2}}+\frac{\Tilde{b}'(x)^2}{1+\Tilde{b}'(x)^2},\quad f=(1-\mu)\frac{\Tilde{b}'(x)}{1+\Tilde{b}'(x)^2}.
\end{align*}
\end{proof}

\begin{remark}{\em
     When $k=0$ (or $\eta=0$), \eqref{2.46}-\eqref{2.48} coincides with Theorem 1.1 in \cite{JQY}. One should notice that even if there is no skin friction acting on the gas, the tangential velocity (momentum) of particles still experience change when they reach the ramp. (A particle does not change its mass when it impinges on the ramp, while its tangential velocity is $(\frac{1}{1+\tilde{b}'(x)^2}, \frac{
     \tilde{b}'(x)}{1+\tilde{b}'(x)^2})$ before it reaches the ramp, which is generally different from $(\frac{H_0(x)}{\Tilde{b}(x)\sqrt{1+\Tilde{b}'(x)^2}}, \frac{\Tilde{b}'(x)H_0(x)}{\Tilde{b}(x)\sqrt{1+\Tilde{b}'(x)^2}})$ given by \eqref{2.47} unless the ramp is straight.) Thus, the assumption of ``what happens to a particle in Newtonian infinite-thin shock layer is the particle striking a surface  loses all of the normal components of its momentum without change in the tangential components" (cf. \cite[p. 173]{HP}) is not rigorous from the mathematical point of view.
}
\end{remark}
\begin{remark}\label{rem36}{\em
Letting $\mu\to0$ in Theorem \ref{thm3}, we have a formal measure solution with $\varrho=\mathcal{L}^2\mathbb{I}_\Omega+\infty\cdot\delta_R$ and
\begin{equation}
u=u_0\mathbb{I}_\Omega=\mathbb{I}_\Omega, ~v=0,~E=E_0\mathbb{I}_\Omega+E_0\mathbb{I}_R.
\end{equation}
At this time the mass concentrated on the ramp is infinite.~Furthermore, the force acting on the boundary is
\begin{equation}
        -w_{f_1}(s(x))=\frac{\Tilde{b}'(x)}{\sqrt{1+\Tilde{b}'(x)^2}},\quad -w_{f_2}(s(x))=0.
\end{equation}
This conclusion is compatible with physical fact since the impulse to the gas should equal to its momentum change  which is along $x$-axis. We can also compute the normal pressure and skin-friction  as
    \begin{align}\label{eq356}
        N(x,\Tilde{b}(x))=\frac{\Tilde{b}'(x)^2}{1+\Tilde{b}'(x)^2},\quad f(x,\Tilde{b}(x))=\frac{\Tilde{b}'(x)}{1+\Tilde{b}'(x)^2}.
    \end{align}
    }
\end{remark}
\begin{remark}{\em
     We call \eqref{2.41}, \eqref{2.44}\eqref{eq345},  \eqref{2.52}\eqref{eq353}, and \eqref{eq356} as the {\em generalized Newton-Busemann law} for the corresponding skin-friction cases. When the ramp is straight with $\Tilde{b}(x)=\tan\theta\cdot x,$ ($\theta\in(0,\pi/2)$ is the tangent inclination angle of the ramp), and there is no skin-frictions, we derive from these formulas that the normal pressure on the ramp is $N=\sin^2\theta$, which is exactly the classical Newton's sine-squared law \cite[p.59]{new}.}
\end{remark}

\section*{Acknowledgments}
This work is supported by the National Natural Science Foundation of China under Grants
No.11871218, No.12071298,  and by Science and Technology Commission of Shanghai Municipality under Grants No.21JC1402500, No.22DZ2229014.

%\begin{thebibliography}{3}

\end{document}